\theoremstyle{plain}
      \newtheorem{theorem}{Theorem}
      \newtheorem{lemma}[theorem]{Lemma}
      \newtheorem{corollary}[theorem]{Corollary}
      \theoremstyle{definition}
      \newtheorem{remark}[theorem]{Remark}
      \newtheorem*{conjecture*}{Conjecture}
\author[A. Fiori]{Andrew Fiori}
\address{Department of Mathematics and Statistics, University of Lethbridge,
4401 University Drive,
Lethbridge, Alberta,
T1K 3M4,
Canada}
\email{andrew.fiori@uleth.ca}
\thanks{Andrew Fiori thanks and acknowledges the University of Lethbridge for their financial support as well as the support of NSERC Discovery Grant RGPIN-2020-05316.}
\title{A Note on the Phragm\'en-Lindel\"of Theorem}
\date{\today}                                           
\begin{document}
\begin{abstract}
    We provide a generalization of the Phragm\'en-Lindel\"of principal of Rademacher with the aim  of correcting, or at least provide a pathway to correcting, several errors appearing in the literature.
\end{abstract}

\maketitle

The Phragm\'en-Lindel\"of principle is well known and widely used in analytic number theory as a tool to obtain bounds on $L$-functions by interpolating bounds obtainable on vertical lines.
One of the early general purpose results which is now widely used is \cite[Theorem 2]{Rademacher}. There is a generalization of this result presented in \cite[Lemma 3]{Tr14-2}. The proof of Rademacher is quite technical as it requires the construction of an analytic function with very precisely described growth properties. The proof of Trudgian contains an error\footnote{the function $F(s)$ defined in the proof is not in general analytic as it is a function of $\sigma={\rm Re}(s)$.} which arises in trying to avoid doing the same technical argument with power of $\log(Q+s)$ rather than $(Q+s)$.

In what follows we sufficiently correct the error so that proofs using it should be recoverable, at worst at the expense of some constants. Moreover, we provide a generalized version Rademacher's work using what is in our view a simpler proof than that of Rademacher. 
We note that the result of Trudgian has been widely cited (45 direct citations, many of which have even more), including several results which now improve upon Trudgian's main result (see \cite{HSW,BW}). Consequently, that our result in practice recovers (at the expense of extra checks for small values, see Section \ref{sec:incdec}) and improves upon Trudgian's work (see Sections \ref{sec:errors} and \ref{sec:examples}) means our work shall in the worst case allow one to almost recover these other result.

In Section \ref{sec:simple} we present the simple form of our main theorem and its proof. In Section \ref{sec:incdec} we present a slightly more complicated version of the theorem which is more adaptable to many real world settings.
In Section \ref{sec:errors} we address a common error in the literature and illustrate how one can recover, within a very small margin, the values that would arise from making this error while simultaneously illustrating how to improve on what would normally have been the correct use of the theorem. Finally, in Section \ref{sec:examples} we give several examples of applications of our theorems to the Riemann zeta function. In these examples we recover, within a factor of $(1+10^{-9})$ for $t>10^{5}$, and in many cases improve upon the erroneous applications we are aware of in the literature. 

\section{Simple form of Main Theorem}\label{sec:simple}

\begin{theorem}[Phragm\'en-Lindel\"of principle]\label{thm:phrag}
Let $a$ and $b$ be real numbers and let $G_1,\ldots, G_r$ be complex functions that are holomorphic for ${\rm Re}(s)\in [a,b]$, have $\overline{G_i(s)}=G_i(\overline{s})$, and satisfy the following growth condition for all $\sigma\in[a,b]$
\[ C_1\exp \left(-C_2 e^{C_3|t|}\right) < |G_i(\sigma + it)|\]
for some  $C_1,C_2>0$ and $0<C_3<\frac{\pi}{b-a}$.
Suppose that $f$ is a holomorphic function for ${\rm Re}(s)\in [a,b]$ and satisfies the growth condition
\[
|f(\sigma + it)|<C_1\exp \left(C_2 e^{C_3|t|}\right)
\]
for $\sigma\in[a,b]$.
 Suppose that we have real numbers $\alpha_1,\ldots, \alpha_r\ge 0$ and $\beta_1,\ldots,\beta_r \ge 0$ such that
\[
|f(a+it)| \leq
\prod_{i=1}^r \left|G_i(a+it) \right|^{\alpha_i}  \]
and
\[
|f(b+it)| \leq
\prod_{i=1}^r \left| G_i(b+it)\right|^{\beta_i}.  \]
Finally, suppose that for $\sigma\in[a,b]$ and for all $t$ the function $|G_i(\sigma+it)|$ is increasing in $\sigma$ if $\alpha_i>\beta_i$ and decreasing if $\beta_i>\alpha_i$.
Then for $a \leq \sigma \leq b$, we have
\[|f(\sigma+it)| \leq
\bigg(\prod_{i=1}^r \left|G_i(\sigma+it)\right|^{\alpha_i}   \bigg)^{\frac{b-\sigma}{b-a}}
\bigg(\prod_{i=1}^r \left|G_i(\sigma+it)\right|^{\beta_i}  \bigg)^{\frac{\sigma-a}{b-a}} .
\]
\end{theorem}

\begin{remark}
    The result can be extended to have $\alpha_i$ and $\beta_i$ negative at the expense of adding the additional growth condition
    \[ |G_i(\sigma + it)| < C_1\exp \left(C_2e^{{C_3}|t|}\right) \]
for some  $C_1,C_2>0$ and $0<C_3<\frac{\pi}{b-a}$ for $\sigma\in[a,b]$ as a hypothesis.  The conclusion is otherwise identical. The modified proof is to simply multiply $f(s)$ by $G_i(s)^{-\min(\alpha_i,\beta_i)}$.

Some standard choices for $G_i(s)$ are
\begin{enumerate}
\item $G(s) = e$ where $e^\alpha=A$ and $e^\beta=B$.
\item $G(s) = (Q+s)$ where $Q>-a$.
\end{enumerate}
\end{remark}

\begin{corollary}\label{cor:simple}
Let $a$ and $b$ be real numbers and suppose $f(s)$ is holomorphic for ${\rm Re}(s)\in [a,b]$ and satisfies the following growth condition for all $\sigma\in[a,b]$
\[
|f(\sigma + it)|<C_1\exp \left(C_2e^{{C_3}|t|}\right)
\]
for some $C_1,C_2>0$ and $0<C_3<\frac{\pi}{b-a}$.

Suppose there exist $A,B,\alpha_1 \ge \beta_1, \alpha_2 = \beta_2, \alpha_3 = \beta_3,Q_1>-a,Q_2>1-a,Q_3>e-a$ such that:
\[ |f(a+it)| \leq A\big|Q_1+a+it\big|^{\alpha_1}\big|\log(Q_2+a+it)\big|^{\alpha_2}\big|\log\log(Q_3+a+it)\big|^{\alpha_3} \]
and
\[ |f(b+it)| \leq B\big|Q_1+b+it\big|^{\beta_1}\big|\log(Q_2+b+it)\big|^{\beta_2}\big|\log\log(Q_3+b+it)\big|^{\beta_3} \]
then if $\sigma\in [a,b]$ we have $|f(\sigma+it)|$ bounded by
\begin{align*}\bigg( A\big|Q_1&+\sigma+it\big|^{\alpha_1}\big|\log(Q_2+\sigma+it)\big|^{\alpha_2}\big|\log\log(Q_3+\sigma+it)\big|^{\alpha_3}  \bigg)^{\frac{b-\sigma}{b-a}}\cdot\\&
\bigg( B\big|Q_1+\sigma+it\big|^{\beta_1}\big|\log(Q_2+\sigma+it)\big|^{\beta_2}\big|\log\log(Q_3+\sigma+it)\big|^{\beta_3}  \bigg)^{\frac{\sigma-a}{b-a}} .
\end{align*}
\end{corollary}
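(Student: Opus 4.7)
The plan is to invoke Theorem \ref{thm:phrag} (extended by the remark to allow negative exponents) with the four auxiliary functions
\[
G_0(s) = e,\qquad G_1(s) = Q_1+s,\qquad G_2(s) = \log(Q_2+s),\qquad G_3(s) = \log\log(Q_3+s),
\]
assigning exponents $(\log A,\alpha_1,\alpha_2,\alpha_3)$ at $\sigma = a$ and $(\log B,\beta_1,\beta_2,\beta_3)$ at $\sigma = b$, so that the constants $A$ and $B$ get absorbed into $G_0^{\log A}$ and $G_0^{\log B}$.

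First I would check holomorphy, the reality condition $\overline{G_i(s)} = G_i(\bar s)$, and the two-sided growth bounds on the closed strip. The hypothesis $Q_1 > -a$ makes $G_1$ entire and nonvanishing on the strip. The hypothesis $Q_2 > 1-a$ gives ${\rm Re}(Q_2+s) > 1$, placing $Q_2+s$ strictly in the right half-plane where the principal branch of $\log$ is holomorphic. The hypothesis $Q_3 > e-a$ gives ${\rm Re}(Q_3+s) > e$, hence $|Q_3+s| > e$ and ${\rm Re}(\log(Q_3+s)) = \log|Q_3+s| > 1$, so $\log(Q_3+s)$ also remains in the right half-plane and $\log\log(Q_3+s)$ is holomorphic there. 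The reality symmetry follows in each case from the corresponding property of the principal branch of $\log$ on the right half-plane. Each $|G_i|$ is bounded below by a positive constant on any fixed $t$-slab and grows at most polynomially, so the bounds $C_1\exp(\pm C_2 e^{C_3|t|})$ are satisfied trivially for any admissible $C_3$.

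The monotonicity hypothesis only needs checking where $\alpha_i \ne \beta_i$. For $G_0 = e$ the modulus is constant and so satisfies any monotonicity vacuously. For $G_1$, we have $|Q_1+\sigma+it|^2 = (Q_1+\sigma)^2 + t^2$, which is increasing in $\sigma$ because $Q_1+\sigma \ge Q_1+a > 0$, matching the direction required when $\alpha_1 \ge \beta_1$. The equalities $\alpha_2 = \beta_2$ and $\alpha_3 = \beta_3$ make the condition vacuous for $G_2$ and $G_3$, which is the whole point of the corollary's restriction: it avoids any delicate monotonicity question about $|\log(Q_2+s)|$ or $|\log\log(Q_3+s)|$ in $\sigma$, where monotonicity would in general fail.

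With every hypothesis verified, Theorem \ref{thm:phrag} gives the stated bound directly after rewriting $e^{\log A\cdot(b-\sigma)/(b-a)} = A^{(b-\sigma)/(b-a)}$ and the analogous expression for $B$. The only step requiring genuine care is the branch-cut verification for $\log$ and $\log\log$, which the arithmetic conditions $Q_2 > 1-a$ and $Q_3 > e-a$ are tailored to guarantee; everything else is routine estimation once the choice of auxiliary $G_i$ has been made.
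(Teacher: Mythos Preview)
Your proposal is correct and follows exactly the paper's approach: apply Theorem~\ref{thm:phrag} with the four auxiliary functions $e$, $Q_1+s$, $\log(Q_2+s)$, $\log\log(Q_3+s)$, noting that the equalities $\alpha_2=\beta_2$ and $\alpha_3=\beta_3$ render the monotonicity hypothesis vacuous for the logarithmic factors. The paper's proof is a one-line reference to this choice of $G_i$; you have simply filled in the routine verifications (branch-cut placement via the constraints on $Q_1,Q_2,Q_3$, the reality symmetry, and the monotonicity of $|Q_1+s|$) that the paper leaves implicit.
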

\begin{proof}
The functions $G_1(s)=e$, $G_2(s)=Q_1+s$, $G_3(s)=\log(Q_2+s)$ and $G_4(s)=\log\log(Q_3+s)$ will satisfy all the hypotheses of the theorem given that $\alpha_2=\beta_2$ and $\alpha_3=\beta_3$.
\end{proof}

\begin{remark}
        The function $|\log (Q+ s)|$ is neither always increasing nor always decreasing in $\sigma$. The behavior depends on $t$ and $\sigma$. For this reason the argument we gave above only works with $\alpha_2=\beta_2$ and $\alpha_3=\beta_3$.

Moreover, you should recall that $|\log(Q_2+s)| \ge \log|Q_2 + s|$ and
    $|\log\log(Q_2+s)| \ge \log\log|Q_2 + s|$.
    Indeed 
    \[ |\log(Q_2+s)| = \bigg(1+\frac{{\rm arg}(Q_2+s)^2}{(\log|Q_2+s|)^2}\bigg)^{1/2}(\log|Q_2+s|)\]
    and
    \[ |\log\log(Q_2+s)| = \bigg(1+\frac{{\rm arg}(\log(Q_2+s))^2}{(\log|\log(Q_2+s)|)^2}\bigg)^{1/2}
    \log|\log(Q_2+s)|
    \]
    where
    \[ {\rm arg}(\log(Q_2+s))
        =\arctan\bigg(\frac{{\rm arg}(Q_2+s)}{\log|Q_2+s|}\bigg).
    \]
    This difference can be relevant in both the hypothesis and conclusion of the theorem depending on the signs of the exponents. There are multiple results in the literature which fail to account for this. See Sections \ref{sec:errors} and \ref{sec:examples} for a method to improve this.
\end{remark}

\begin{lemma}\label{lem:bisect}
    With the hypothesis as in Theorem \ref{thm:phrag} we have
    \[ |f((a+b)/2 + it)| < \left| \prod_{i=1}^r G_i((a+b)/2 +it)^{\alpha_i+\beta_i} \right|^{1/2}. \]
    That is,  Theorem \ref{thm:phrag} holds where $\sigma$ is the midpoint between $a$ and $b$.
\end{lemma}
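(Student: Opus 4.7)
The plan is to reduce the statement to the classical Phragm\'en-Lindel\"of principle for a strip (the version with bounded boundary data) by exploiting the reflection symmetry $s \mapsto a+b-s$ that fixes the midline $\mathrm{Re}(s) = (a+b)/2$. First I would introduce the reflected partners
\[ \tilde{f}(s) = \overline{f(a+b-\overline{s})}, \qquad \tilde{G}_i(s) = G_i(s)\, G_i(a+b-s), \]
both of which are holomorphic on $a \le \mathrm{Re}(s) \le b$; moreover $\tilde{G}_i$ is non-vanishing on the strip by the assumed lower bound on $|G_i|$, so on the simply-connected strip a holomorphic branch of $\log \tilde{G}_i$ exists and gives a well-defined $\tilde{G}_i(s)^{(\alpha_i+\beta_i)/2}$.

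Then I would form the ratio
\[ H(s) = \frac{f(s)\,\tilde{f}(s)}{\prod_{i=1}^r \tilde{G}_i(s)^{(\alpha_i+\beta_i)/2}} \]
and bound $|H|$ on each vertical boundary of the strip. On $\mathrm{Re}(s) = a$, combining the hypothesized bound on $|f(a+it)|$ with $|\tilde{f}(a+it)| = |f(b-it)| \le \bigl|\prod_i G_i(b+it)^{\beta_i}\bigr|$ (using $\overline{G_i(s)} = G_i(\overline{s})$ to convert $b-it$ to $b+it$ in magnitude) would give
\[ |H(a+it)| \le \prod_{i=1}^r \left( \frac{|G_i(a+it)|}{|G_i(b+it)|} \right)^{(\alpha_i-\beta_i)/2}, \]
and the monotonicity hypothesis on $|G_i|$ in $\sigma$ forces each factor to be $\le 1$ regardless of the sign of $\alpha_i-\beta_i$. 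The identical computation on $\mathrm{Re}(s) = b$ yields the same bound, so $|H| \le 1$ on the whole boundary of the strip.

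The double-exponential growth conditions on $|f|$ and $|G_i|$ combine to give $|H(\sigma+it)| \le C_1' \exp(C_2' e^{C_3|t|})$ with $C_3 < \pi/(b-a)$, well within the classical Phragm\'en-Lindel\"of regime for a strip of width $b-a$, so invoking that classical principle yields $|H(s)| \le 1$ throughout the strip. Finally, at the fixed point $s_0 = (a+b)/2 + it$ of the reflection $s \mapsto a+b-\overline{s}$ we have $\tilde{f}(s_0) = \overline{f(s_0)}$ and $\tilde{G}_i(s_0) = |G_i(s_0)|^2$, so $|H(s_0)| \le 1$ rearranges to the claimed inequality. The main obstacle I anticipate is the bookkeeping with the non-integer exponents: choosing a coherent branch of $\log \tilde{G}_i$ across the strip and checking that the monotonicity hypothesis in its stated ``sign-of-$\alpha_i-\beta_i$'' form really does produce a $\le 1$ factor in every case on both boundaries.
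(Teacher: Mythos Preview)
Your proof is correct and follows essentially the same strategy as the paper's: form the product of $f$ with its holomorphic reflection across the midline, divide by suitable powers of the $G_i$, apply the classical Phragm\'en--Lindel\"of principle to get a bound $\le 1$ throughout the strip, and evaluate at the midpoint where the product collapses to $|f|^2$. The only cosmetic difference is that the paper's denominator carries the asymmetric exponents $G_i(a+s)^{\alpha_i}G_i(b-s)^{\beta_i}$ (with a case split on the sign of $\alpha_i-\beta_i$), whereas you use the symmetric choice $\bigl(G_i(s)\,G_i(a+b-s)\bigr)^{(\alpha_i+\beta_i)/2}$; both reduce to $|G_i((a+b)/2+it)|^{\alpha_i+\beta_i}$ at the midpoint and both appeal to the monotonicity hypothesis to force the boundary ratio $\le 1$.
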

\begin{proof}
We shall denote by $\overline{f}$ the function $\overline{f}(s) = \overline{f(\overline{s})}$ which is analytic for ${\rm Re}(s)\in[a,b]$.
We note that a bound on $|f(s)|$ in terms of $G_i(s)$ satisfying $\overline{G_i(s)} = G_i(\overline{s})$ implies the same bound on $|\overline{f}(s)|$.

Consider the function
\[ F(s) = f(a+s)\overline{f}(b-s).\]
Define
\[ H_i(s) = \begin{cases} 
G_i(a +s)^{\alpha_i}G_i(b-s)^{\beta_i} & \alpha_i > \beta_i 
\\
G_i(a +s)^{\beta_i}G_i(b-s)^{\alpha_i} & \alpha_i < \beta_i 
\\
G_i(a +s)^{\alpha_i}G_i(b-s)^{\beta_i} & \alpha_i=\beta_i
\end{cases}
\]
and
\[ H(s) = \prod_{i=1}^r H_i(s).
\]
The growth conditions in $G_i$ imply they are non-vanishing for ${\rm Re}(s)\in[a,b]$ which allows us to conclude that
\[ F(s)/H(s) \]
is analytic in $[0,(b-a)]$ and it satisfies the necessary growth condition to apply the standard Phragm\'en-Lindel\"of principle (see for example \cite[Theorem 12.9]{Rudin}). Moreover, for ${\rm Re}(s)=0$ and ${\rm Re}(s)=(b-a)$ we have $F(s)/H(s) \leq 1$ as
\[ F(it) = |f(a+it)\overline{f}(b-it)| = |f(a+it)f(b+it)| \]
and
\[ |F((b-a)+it)|  =  |f(b+it)\overline{f}(a-it)|  = |f(a+it)f(b+it)|\]
are from our hypotheses bounded by
\[ \prod_{i=1}^r |G_i(a+it)|^{\alpha_i}|G_i(b+it)|^{\beta_i}.\] 
Now notice that $|H_i(it)|$ and $|H_i((b-a) + it)| $ take on the values\footnote{which is taken on at which point depends on the case we are considering.}
\[  |G_i(a+it)|^{\alpha_i}|G_i(b+it)|^{\beta_i} \qquad\text{and}\qquad  |G_i(a+it)|^{\beta_i}|G_i(b+it)|^{\alpha_i}. \]
Under the hypotheses the second is larger than the first and the first is the needed bound.

Thus by the Phragm\'en-Lindel\"of principle we have $F(s)/H(s) \leq 1$ on the whole interval.
It follows by evaluating at $s=(b-a)/2 + it$ that for all $t$ we have
\[ |F((b-a)/2+it)| \leq |H((b-a)/2+it)| = \prod_{i=1}^r |G_i((a+b)/2 + it)|^{\alpha_i+\beta_i}. \]
Since
\[ |F((b-a)/2+it)| = |f((a+b)/2 + it)\overline{f}((a+b)/2 - it)| =  |f((a+b)/2 + it)|^2 \]
we obtain the result.
\end{proof}

\begin{proof}[Proof of Theorem \ref{thm:phrag}]
We first claim that the result holds for all $\sigma$ of the form
\[ a + (b-a)\frac{n}{2^k}\]
where $n$ and $k$ are non-negative integers so that $\frac{n}{2^k}\in [0,1]$.
The proof of this proceeds by induction on $k$, using Lemma \ref{lem:bisect} applied to the interval $a=\tfrac{\lfloor n/2 \rfloor}{2^{k-1}}$ and $b=\tfrac{\lceil{n/2}\rceil}{2^{k-1}}$ with the bounds obtained from the inductive hypothesis.

The final result now follows by the continuity of $f(s)$ and $G_i(s)$.
\end{proof}

\begin{remark}
We note that in most applications as functions of $t$ we will have $|G_i(a+it)| \sim |G_i(\sigma+it)| \sim |G_i(b+it)|$ so that if one wants to work with a function where $G_i$ which is decreasing (respectively increasing) in $\sigma$ rather than increasing (respectively decreasing) one can replace $G_i(s)$ by $G(b-a-s)$ at the expense of possibly needing to multiply by a constant to preserve the inequalities on the boundary.
\end{remark}

\section{Dealing with the Condition of Increasing/Decreasing in $\sigma$}\label{sec:incdec}

Even though the function $|\log (Q+ s)|$ is neither always increasing nor always decreasing in $\sigma$ it is desirable to be able to use it as $G_i(s)$. There are options for dealing with such functions:
\begin{itemize}
\item Replace $G(s)$ with $G(Q'-s)$ provided the function was at least consistently always increasing or decreasing.  This is essentially what occurs in \cite[Theorem 2]{Rademacher} for situations $\beta>\alpha$ applied to $G(s) = Q+s$. Notice that the function $|Q-\sigma-it|$ would be decreasing in $\sigma$ when $Q-\sigma>0$ whereas $|Q+\sigma+it|$ is increasing in $\sigma$ where $Q+\sigma>0$.
\item Replace $G(s)$ with $e^{c(s-a)}G(s)$ for some choice $c$ such that the resulting function is appropriately increasing/decreasing.
\item Grouping together several $G_i(s)$ so that the product satisfies the conditions.
\item Satisfy the increasing condition for $t>T_0$ and impose an alternative condition for $t<T_0$.
\end{itemize}
In each case we risk making the final result worse by a constant in order to satisfy the conditions.
We provide a precise version of the last option below.

\begin{theorem}\label{thm:phrag2}
Let $a, b$ be real numbers, and let $G_1,\ldots, G_r$ be complex functions that are holomorphic for ${\rm Re}(s)\in [a,b]$, have $\overline{G_i(s)}=G_i(\overline{s})$, and satisfy the following growth condition for all $\sigma\in[a,b]$
\[ C_1\exp \left(-C_2 e^{{C_3}|t|}\right) < |G_i(\sigma + it)|\]
for some  $C_1,C_2>0$ and $0<C_3<\frac{\pi}{b-a}$.
Suppose that $f$ is a holomorphic function on ${\rm Re}(s)\in [a,b]$ and satisfies the growth condition
\[
|f(\sigma + it)|<C_1\exp \left(C_2 e^{{C_3}|t|}\right)
\]
for all $\sigma\in[a,b]$.
Suppose that we have real numbers $\alpha_1,\ldots, \alpha_r\ge 0$ and $\beta_1,\ldots,\beta_r \ge 0$ such that
\[
|f(a+it)| \leq
\prod_{i=1}^r  \left|G_i(a+it)\right|^{\alpha_i}   \]
and
\[
|f(b+it)| \leq
\prod_{i=1}^r \left| G_i(b+it)\right|^{\beta_i} .  \]
Suppose for $|t| > T_0$ and $\sigma\in[a,b]$ that $|G_i(\sigma+it)|$ is increasing in $\sigma$ if $\alpha_i>\beta_i$ and decreasing if $\beta_i>\alpha_i$.
Finally, suppose that for each $|t|\leq T_0$ we have 
\[ |f(\sigma + it)| < \prod_i {\rm min}_\sigma( |G(\sigma+it)| )^{\alpha_i\frac{b-\sigma}{b-a} + \beta_i\frac{\sigma-a}{b-a}}.\]
Then for $a \leq \sigma \leq b$, we have
\[|f(\sigma+it)| \leq
\bigg(\prod_{i=1}^r \left|G_i(\sigma+it)\right|^{\alpha_i}   \bigg)^{\frac{b-\sigma}{b-a}}
\bigg(\prod_{i=1}^r \left|G_i(\sigma+it)\right|^{\beta_i}  \bigg)^{\frac{\sigma-a}{b-a}} .
\]
\end{theorem}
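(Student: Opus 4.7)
The plan is to mirror the proof of Theorem \ref{thm:phrag}: first establish the analogue of Lemma \ref{lem:bisect} (the midpoint estimate) under the weakened monotonicity hypothesis, then bisect. The only place the proof of Lemma \ref{lem:bisect} uses the monotonicity of $|G_i(\sigma+it)|$ in $\sigma$ is in verifying the boundary inequality $|F(s)/H(s)| \leq 1$ on the vertical lines ${\rm Re}(s) = 0$ and ${\rm Re}(s) = b-a$ before applying the standard Phragm\'en--Lindel\"of principle, so this is the only step that needs modification.

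The key observation is that for $|t| \leq T_0$, specializing the alternative hypothesis to $\sigma = a$ and $\sigma = b$ gives
\[
|f(a+it)| \leq \prod_i \bigl(\min_\sigma|G_i(\sigma+it)|\bigr)^{\alpha_i} \quad\text{and}\quad |f(b+it)| \leq \prod_i \bigl(\min_\sigma|G_i(\sigma+it)|\bigr)^{\beta_i}.
\]
Since $\min_\sigma |G_i(\sigma+it)|$ is at most each of $|G_i(a+it)|$ and $|G_i(b+it)|$ and the exponents in the conclusion are non-negative, Theorem \ref{thm:phrag2} already holds directly at every $(\sigma,t)$ with $|t| \leq T_0$. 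Multiplying these two boundary bounds also gives $|F(it)|, |F(b-a+it)| \leq \prod_i \bigl(\min_\sigma|G_i(\sigma+it)|\bigr)^{\alpha_i+\beta_i}$ for the auxiliary function $F(s) = f(a+s)\overline{f}(b-s)$ from the proof of Lemma \ref{lem:bisect}.

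Proceeding as in that proof, I would define $H(s) = \prod_i H_i(s)$ exactly as there (its definition depends only on the sign of $\alpha_i - \beta_i$ and on monotonicity in $\sigma$ for $|t| > T_0$, which we still have). On the boundary $|H_i|$ equals either $|G_i(a+it)|^{\alpha_i}|G_i(b+it)|^{\beta_i}$ or $|G_i(a+it)|^{\beta_i}|G_i(b+it)|^{\alpha_i}$, and in both cases is bounded below by $\bigl(\min_\sigma|G_i|\bigr)^{\alpha_i+\beta_i}$. Thus $|F/H| \leq 1$ holds on the boundary for $|t| \leq T_0$ by the alternative hypothesis, and for $|t| > T_0$ by the original monotonicity argument. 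Phragm\'en--Lindel\"of then gives $|F/H| \leq 1$ throughout the strip and hence the midpoint estimate; the bisection argument of Theorem \ref{thm:phrag} extends this to all dyadic $\sigma$, and continuity finishes the proof.

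The main obstacle is the bookkeeping needed to ensure both hypotheses transfer to each sub-interval $[\sigma_k,\sigma_{k+1}]$ appearing in the bisection. Monotonicity for $|t| > T_0$ is trivially inherited, and the alternative hypothesis also transfers because $\min_{[\sigma_k,\sigma_{k+1}]}|G_i| \geq \min_{[a,b]}|G_i|$, while a short algebraic check shows the sub-interval interpolation exponents recombine to the global $\alpha_i(b-\sigma)/(b-a) + \beta_i(\sigma-a)/(b-a)$; together these facts show the original alternative hypothesis implies every sub-interval version.
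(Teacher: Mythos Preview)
Your proposal is correct and follows essentially the same approach as the paper's own proof, which is extremely terse: it simply says the proof is ``as before'' except that the boundary bound $|F/H|\le 1$ for $|t|<T_0$ comes from the stronger hypothesis, and remarks that this hypothesis ``is strong enough to descend to the inductive cases.'' Your write-up spells out precisely those two points (specializing the alternative hypothesis to $\sigma=a,b$ for the boundary estimate, and the algebraic check that the interpolated exponents and the enlarged minimum make the hypothesis inherit to sub-intervals), so you have in effect supplied the details the paper omits.
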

\begin{proof}
    The proof is as before, except that to obtain that
    \[ F(s)/H(s) \]
    is bounded on the boundary for $|t|<T_0$ we must use the stronger hypothesis. We note that the hypothesis is strong enough to descend to the inductive cases which need to be considered.
\end{proof}

\begin{remark}
To verify the condition 
\[ |f(\sigma + it)| < \prod_i {\rm min}_\sigma( |G_i(\sigma+it)| )^{\alpha_i\frac{b-\sigma}{b-a} + \beta_i\frac{\sigma-a}{b-a}}\]
one could check numerically on the whole region using complex ball arithmetic, alternatively, one can prove something stronger by using the maximum modulus condition to verify something on only the boundary.

Given that in practice one is interested in large $t$ one can typically arrange for the condition for small values to be satisfied by adjusting $G$ and preserving the asymptotics as one does in selecting $Q_1$ and $Q_2$ in Theorem \ref{thm:phrag2}.
\end{remark}

\section{Errors in the literature}\label{sec:errors}

There are unfortunately a number of errors in the literature where $|\log( Q+ s)|$ was incorrectly handled as $\log |Q+ s|$. Consequently, correcting the literature in some cases would require more than simply checking the hypothesis needed in our corrected version; it would likely also require changes to the numerical results.
 In this section we highlight how one can almost recover the numerics from such an error, at least for $t$ that are not too small (for an illustration of what we mean by almost see Section \ref{sec:examples}).

Moreover, in number theory it is the case that bounds are often proven in the form
\[ |\zeta(1 + it)| < C\log|t|\qquad t>e \]
so that the input we have is actually stronger than the bound $|\log (\sigma+it)|$ which would normally be used as a hypothesis to our theorem. It would be desirable that the conclusions we prove would be similarly strong.
As noted before the discrepancy here is that as $t\to \infty$
\[ |\log (\sigma+it)|  =  (1+O_\sigma(1/(\log |t|)^2)) \log|t|. \]
Unfortunately, the $(1/\log|t|)^2$ term shrinks slowly enough to impact numerics in some applications.  
Noticing that
\[ \tfrac{1}{2}\left|\log (1 - (\sigma+it)^2)\right| = (1+O_\sigma(1/|t|^2)) \log|t| \]
provides the core idea to improve this discrepancy.
The essential idea here is that ${\rm arg}(1+\sigma+it) \sim - {\rm arg}(1-\sigma-it) $ so that we can obtain cancellation of error terms as in the formulas below.

Now, given an interval $[a,b]$ we define functions
\[ G_{Q_1,Q_2}(s) = \frac{1}{2}\log( (Q_1+s)(Q_2-s) )\]
where $Q_1+a\ge 1$ and $Q_2-b\ge 1$.
Notice that we have 
\begin{align*}
\log(Q_1+\sigma+it) 
&= \frac{1}{2} \log ((Q_1+\sigma)^2 + t^2) + i \arctan\left(\frac{t}{Q_1+\sigma}\right) \\
&= \log t + \log\left(1 + \left(\frac{Q_1+\sigma}{t}\right)^2\right)+ i \arctan\left(\frac{t}{Q_1+\sigma}\right)
\end{align*}
and
\begin{align*}
\log(Q_2-\sigma-it) 
&= \frac{1}{2} \log ((Q_2-\sigma)^2 + t^2) - i \arctan\left(\frac{t}{Q_2-\sigma}\right) \\
&= \log t + \log\left(1 + \left(\frac{Q_2-\sigma}{t}\right)^2\right) - i \arctan\left(\frac{t}{Q_2-\sigma}\right)
\end{align*}
so that with
we have\footnote{Using $\arctan(x/y)=\pi/2-\arctan{y/x}$.}
\begin{align*}
G_{Q_1,Q_2}(\sigma+it)
&= \log t + \log\left(1 + \left(\frac{Q_1+\sigma}{t}\right)^2\right)\left(1 + \left(\frac{Q_2-\sigma}{t}\right)^2\right) \\&\quad+ i \left(\arctan\left(\frac{Q_2-\sigma}{t}\right)-\arctan\left(\frac{Q_1+\sigma}{t}\right)\right).
\end{align*}
Approximations for $\log$ and $\arctan$, which can be made precise, will give as $t\to \infty$  that
\[ \log\left(\left(1 + \left(\frac{Q_1+\sigma}{t}\right)^2\right)\left(1 + \left(\frac{Q_2-\sigma}{t}\right)^2\right)\right) \sim \left(\frac{Q_1+\sigma}{t}\right)^2 + \left(\frac{Q_2-\sigma}{t}\right)^2 \]
and
\[ \arctan\left(\frac{Q_2-\sigma}{t}\right)-\arctan\left(\frac{Q_1+\sigma}{t}\right) \sim 
\frac{Q_2-Q_1-2\sigma}{t}.\]
Then for $\sigma\in[a,b]$ we will have, using $|1+i/t|=1+O(1/t^2)$, that 
\[ \log|t| < |G_{Q_1,Q_2}(\sigma+it)| <  \left(1+O_{Q_1,Q_2,\sigma}\left(\frac{1}{t^2\log|t|}\right)\right)\log|t|.\]
Moreover, $|G_{Q_1,Q_2}(\sigma+it)|$ will be decreasing in $\sigma$ for $\sigma\in [-Q_1, (Q_2-Q_1)/2]$ and increasing for $\sigma\in [(Q_2-Q_1)/2,Q_2]$ provided $t$ sufficiently large (depending on $Q_1$ and $Q_2$). Hence with a careful choice of $Q_1$ and $Q_2$ it can satisfy the conditions of Theorem \ref{thm:phrag2}.

For iterated logarithms the situation can now be handled as follows.
Notice that for appropriate $Q_1$ and $Q_2$ we have
\[ \arg(G_{Q_1,Q_2}(\sigma+it)) < O(\tfrac{1}{t}) \]
and hence
\[ | \log (G_{Q_1,Q_2}(\sigma+it))| = \left(1+ O\left(\frac{1}{t^2 (\log t)^2}\right)\right) \log|G_{Q_1,Q_2}(\sigma+it)| \]
provides a reasonable assymptotic approximation to $\log\log|t|$
One can use this directly to introduce $\log\log|t|$ terms while applying Theorem \ref{thm:phrag2}. 

Another potentially useful application comes from observing that functions of the form
\[ H(s) = {\rm exp}( (\beta-\alpha) s \log( G_{Q_1,Q_2}(s)) )G_{Q_1,Q_2}(s)^\alpha \]
will have 
\[ |H(\sigma + it)| = (1+O(1))(\log|t|)^{\alpha(1-\sigma) + \beta\sigma} \]
so that one could potentially directly use it to prove a theorem like Theorem \ref{thm:phrag2}, at the expense of introducing a constant.

\section{Examples}\label{sec:examples}

We provide several examples illustrating how to apply our theorems to obtain bounds for the Riemann zeta function in contexts where the exponent on the log factor in the bound is decreasing in $\sigma$, constant in $\sigma$, or increasing in $\sigma$.

\subsection{First Example}

We have the following bounds on $\zeta(s)$ on the line $1+it$ for $|t|>3$
\begin{align*}
|\zeta(1+it)| & \leq \log|t|\\
|\zeta(1+it)| & \leq \tfrac{1}{2}\log|t| + 1.93\\
|\zeta(1+it)| & \leq \tfrac{1}{5}\log|t| + 44.02\\
|\zeta(1+it)| & \leq 1.731\frac{\log|t|}{\log\log|t|}\\
|\zeta(1+it)| & \leq 58.096(\log|t|)^{2/3}
\end{align*}
which come from \cite{patel_explicit_2022} (first three), \cite{hiary2023explicit} (fourth), and \cite{BELLOTTI2024128249} or \cite{leong2024explicitestimateslogarithmicderivative} (for the final).
Additionally it is well known that for any $\eta>1$ we have
\[ |\zeta(\eta+it)| \leq \zeta(\eta). \]
Since for $\sigma>1$ we have
\[ \left| \frac{\sigma+it-1}{\sigma+it} \right| < 1 \]
all of the above bounds also hold with $\zeta(s)$ replaced by $\frac{s-1}{s}\zeta(s)$.

One may verify numerically, with the maximum modulus principle simplifying the check, that
for $\sigma\in [1,2]$ and $t\in[0,3]$ that 
$\left|\frac{s-1}{s}\zeta(s)\right| \leq 1 $
with the maximum being taken on at $s=1$. Also, for
 $\sigma\in [1,2]$ and $t\in[0,30]$ we have
 $ \left|\frac{s-1}{s}\zeta(s)\right| \leq 2.1 $
 the maximum being near $1+27.7i$.

Now, consider the functions
\begin{align*} 
G_1(s) &= \frac{1}{2}\left(\log(e+s) + \log(e+2-s) \right) \\
G_2(s) &= \frac{1}{4}\left(\log(e+s) + \log(e+2-s) \right) + 1.93 \\
G_3(s) &= \frac{1}{10}\left(\log(e+s) + \log(e+2-s) \right) + 44.02 \\
G_4(s) &= 1.731\frac{\log(e+s) + \log(e+2-s)}{2\log \left(\frac{1}{2}\left(\log(e+s) + \log(e+2-s)\right)\right)} \\
G_5(s) &= 58.096\left(\frac{1}{2}\left(\log(e+s) + \log(e+2-s) \right)\right)^{2/3}.
\end{align*}
We verify numerically that for $\sigma\in [1,2]$ and $t\in[0,3]$ the absolute values of each of these is at least $1$, and for $G_4(s)$ it is bounded below by $3$ for $\sigma\in [1,2]$ and $t\in[0,30]$
Moreover, for $t\ge 3$ each of $G_1$, $G_2$, $G_3$, and $G_5$ are increasing in $\sigma$ and for $t\ge 30$ we have $G_4$ is increasing in $\sigma$.
For $\sigma\in [1,2]$ and $|t|\ge 3$ we have
\begin{align*}
\log|t| &\leq \left|\frac{1}{2}\left(\log(e+s) + \log(e+2-s) \right)\right| \\
\frac{\log|t|}{\log\log|t|} &\leq \left|\frac{\log(e+s) + \log(e+2-s)}{2\log \left(\frac{1}{2}\left(\log(e+s) + \log(e+2-s)\right)\right)} \right| 
\end{align*} 
We may thus apply Theorem \ref{thm:phrag2} to the function $f(s) = \frac{s-1}{s}\zeta(s)$ between $\sigma=1$ and $\sigma=\eta$ for some $\eta\in [1,2]$ (for each $G_i$ separately) where the exponents on $G_i$ are $\alpha=1$ and $\beta=0$ and as in Corollary \ref{cor:simple} we introduce $A=1$ and $B=\zeta(\eta)$.

Notice that for the cases of $G_1$, $G_2$, $G_3$ and $G_5$ the conditions for $|t| \leq 3$ are trivially verified as $\zeta(\eta) \ge \zeta(2)>1$. Similarly with $G_4$ with $|t| \leq 30$ and $\eta<1.5$ as $\zeta(1.5)>2.1$. To extend to $\eta\in[1.5,2]$ one can use that for $\sigma\in[1.5,2]$ and $t\in[0,30]$ we have
$\left| \frac{\sigma+it-1}{\sigma+it}\zeta(s) \right| < \zeta(2)$.
We thus conclude that for any $\eta\in[1,2]$ that for $\sigma={\rm Re}(s) \in[1,\eta]$ we have
\[ \left|{\frac{s-1}{s}\zeta(s)} \right| \leq  \zeta(\eta)^{\frac{\sigma-1}{\eta-1}} |G_i(s)|^{\frac{\eta-\sigma}{\eta-1}}. \]
Now notice
\begin{align*}
    &\frac{\left|\sigma+it\right|}{\left|\sigma-1+it \right|}  |G_1(\sigma+it)|/\log|t|&\\
    &\frac{\left|\sigma+it\right|}{\left|\sigma-1+it\right|}  |G_2(\sigma+it)|/( \tfrac{1}{2}\log|t| + 1.93)&\\
    &\frac{\left|\sigma+it\right|}{\left|\sigma-1+it\right|}  |G_3(\sigma+it)|/( \tfrac{1}{5}\log|t| + 44.02)&\\
    &\frac{\left|\sigma+it\right|}{\left|\sigma-1+it\right|}  |G_4(\sigma+it)|/(1.731\frac{\log|t|}{\log\log|t|})&\\
    &\frac{\left|\sigma+it\right|}{\left|\sigma-1+it\right|}  |G_5(\sigma+it)|/(58.096(\log|t|)^{2/3})&\\
\end{align*}
are all decreasing in $t$ with a limit of $1$. 
This leads to constants $C_i(t_0)$ where
\begin{align}
\left|{\zeta(\sigma+it)}\right|&\leq C_1(t_0)\zeta(\eta)^{\frac{\sigma-1}{\eta-1}}  (\log|t|)^{\frac{\eta-\sigma}{\eta-1}}\\
\left|{\zeta(\sigma+it)}\right|&\leq C_2(t_0)\zeta(\eta)^{\frac{\sigma-1}{\eta-1}} (\tfrac{1}{2}\log|t| + 1.93)^{\frac{\eta-\sigma}{\eta-1}}\\
\left|{\zeta(\sigma+it)}\right|&\leq C_3(t_0)\zeta(\eta)^{\frac{\sigma-1}{\eta-1}} (\tfrac{1}{5}\log|t| + 44.02)^{\frac{\eta-\sigma}{\eta-1}}\\
\left|{\zeta(\sigma+it)}\right|&\leq C_4(t_0)\zeta(\eta)^{\frac{\sigma-1}{\eta-1}} (1.731\frac{\log|t|}{\log\log|t|})^{\frac{\eta-\sigma}{\eta-1}}\\
\left|{\zeta(\sigma+it)}\right|&\leq C_5(t_0)\zeta(\eta)^{\frac{\sigma-1}{\eta-1}} (58.096(\log|t|)^{2/3})^{\frac{\eta-\sigma}{\eta-1}}
\end{align}
for all $t>t_0$ and $\sigma\in[1,\eta]$. One verifies that in all cases we have
\[ 
\begin{array}{ccc}
 C_i(10) < 1 + 4.6\cdot10^{-2} 
 & C_i(10^2) < 1 + 3.2\cdot10^{-4} 
 & C_i(10^3) < 1 + 2.6\cdot10^{-6} \\ 
 C_i(10^4) < 1+2.4\cdot10^{-8}
 &  C_i(10^5) < 1+2.2\cdot10^{-10}
 &  C_i(10^6) <  1+2.1\cdot10^{-12}.
 \end{array}
 \]

\subsection{Second Example}

To bound $|\zeta(s)|$ for $\Re(s)\in[5/7,1]$ we shall also need the bound 
\[ |\zeta(5/7+it)|< 1.546 \log(t) t^{1/14} \qquad t\ge 3\]
due to \cite{YANG2024128124}.

Now set $G(s) = \frac{1}{2}\left(\log(e+s) + \log(e+2-s)\right)$ and notice that for $t\ge 3$ and $0<\sigma<2$ we have
\[ \log(t) < \left| G(\sigma+it) \right|. \]
Now set $H(s) = \zeta(s)\frac{s-1}{sG(s)}$. 
For $\sigma=5/7$ and $t\ge 3$ we have
\[ \left|H(\sigma+it)\right| \leq 1.546|\sigma+it|^{1/14}  \]
and for $\sigma=1$ and $t\ge 3$ we have
\[ \left|H(\sigma+it)\right| \leq 1. \]
We apply our Theorem \ref{thm:phrag} to $H(s)$ to conclude that for $\sigma\in[5/7,1]$ we have
\[ |H(\sigma+it)| < 1.546^{\frac{7}{2}(1-\sigma)}|\sigma+it|^{\frac{1}{4}(1-\sigma)}. \]
It follows that for $\sigma\in[5/7,1]$ we have
\[ |\zeta(\sigma+it)| < 1.546^{\frac{7}{2}(1-\sigma)}|\sigma+it|^{\frac{1}{4}(1-\sigma)}\left|\frac{\sigma+it}{\sigma+it-1}G(\sigma+it)\right|. \]
Finally, for $t>10^{5}$ and  $\sigma\in[5/7,1]$ we have
\[\left|\frac{\sigma+it}{\sigma+it-1}G(\sigma+it)\right| < (1+10^{-10})\log|t| \]
so that for $t>10^{5}$ and  $\sigma\in[5/7,1]$ we have
\begin{equation} 
|\zeta(\sigma+it)| < (1+10^{-10})1.546^{\frac{7}{2}(1-\sigma)}|t|^{\frac{1}{4}(1-\sigma)}\log|t|. \end{equation}

\subsection{Third Example}

To bound $|\zeta(s)|$ for $\Re(s)\in[1/2,5/7]$ we shall also need the bound
\[  |\zeta(\sigma+it)| \leq 66.7t^{\frac{27}{164}}\]
due to \cite{patel2023explicit}.

We let $G(s) = \frac{1}{2}\left(\log(4e+s) + \log(4e+2-s)\right)$ so that $|G(\sigma+it)|$ is decreasing in $\sigma$ for $\sigma\in[1/2,5/7]$ and $t\ge 3$.

We thus have for $t \ge 3$ that
\[ |\zeta(1/2+it)| \leq 66.7t^{\frac{27}{164}} \leq 66.7(1/2 + t)^{\frac{27}{164}}|G(1/2+it)|^0 \]
and
\[ |\zeta(5/7+it)| \leq 1.546|t|^{1/14} \log t \leq 1.546|5/7+it|^{1/14} |G(5/7+it)|. \]

Now, since the minimum value of $1.546|\sigma+it|^{1/14} |G(\sigma+it)|$ and $66.7|\sigma + t|^{\frac{27}{164}}$
where $\sigma\in[1/2,5/7]$ and $t\in [0,3]$ are both larger than the maximum value of $|\zeta(\sigma+it)|$ in the same region we may apply Theorem \ref{thm:phrag2} and conclude
\[
|\zeta(\sigma+it)| \leq \left(66.7|\sigma + it|^{\frac{27}{164}}\right)^{\frac{14}{3}(\frac{5}{7}-\sigma)}\left(1.546|\sigma+it|^{1/14} |G(\sigma+it)| \right)^{\frac{14}{3}(\sigma-\frac{1}{2})} .
\]
As before it is now straight forward to verify that for $t>10^5$ and $\sigma\in[1/2,5/7]$ we have
\begin{equation}  |\zeta(\sigma+it)| \leq (1+10^{-9})66.7^{\frac{14}{3}(\frac{5}{7}-\sigma)}1.546^{\frac{14}{3}(\sigma-\frac{1}{2})} t^{\frac{47}{123}-\frac{107}{246}\sigma} (\log|t|)^{\frac{14}{3}(\sigma-\frac{1}{2})}.
\end{equation}

\printbibliography

\end{document}